\documentclass[a4paper,11pt]{amsart}

\usepackage{amssymb, amsfonts, latexsym, amsthm, amsmath, verbatim}
\pdfoutput=1
\usepackage{array,longtable}
\usepackage{color}
\usepackage[all]{xy}
\usepackage{amscd}
\usepackage{mathrsfs} 
\usepackage[english]{babel}
\usepackage{tikz}
\usepackage{graphicx} % use it to trim pictures
\usepackage{standalone}
\usepackage{epstopdf}
\usepackage[paper=a4paper, text={155mm,218mm},centering]{geometry}
\definecolor{darkblue}{rgb}{0,0,0.4} 
\usepackage[colorlinks=true, citecolor=darkblue, filecolor=darkblue, linkcolor=darkblue,urlcolor=darkblue]{hyperref}
\usetikzlibrary{arrows,cd,decorations.pathmorphing,decorations.pathreplacing}
\usepackage{amscd}
\usepackage{enumitem}
\usepackage{transparent}
\usepackage{cancel}
\usepackage{tcolorbox}
\usepackage{makecell}
\usepackage{imakeidx}
\tikzstyle{crossing}=[circle,fill=white,minimum height=6pt,inner sep=0pt, outer sep=0pt, style={transform shape=false}]

\usepackage{caption} 
\usepackage{thm-restate}
\usepackage{cleveref}

\numberwithin{equation}{section}

\theoremstyle{plain}
\newtheorem{theorem}[equation]{Theorem}
\newtheorem{construction}[equation]{Construction}
\newtheorem{proposition}[equation]{Proposition}
\newtheorem{corollary}[equation]{Corollary}
\newtheorem{lemma}[equation]{Lemma}

\theoremstyle{definition}

\newtheorem{example}[equation]{Example}

\newtheorem{definition}[equation]{Definition}

\newtheorem{notation}[equation]{Notation}
\newtheorem*{ack}{Acknowledgments}

\numberwithin{figure}{section}

\usepackage{etoolbox}
\docsvlist{A,B,C,D,E,F,G,H,I,J,K,L,M,N,O,P,Q,R,S,T,U,V,W,X,Y,Z}
\docsvlist{N,Z,Q,C,R,F,A,W,M}
\docsvlist{A,B,C,D,E,F,G,H,I,J,K,L,M,N,O,P,Q,R,S,T,U,V,W,X,Y,Z,a,b,c,d,e,f,h,i,j,k,m,n,o,p,q,r,s,u,v,w,x,y,z}
 
\docsvlist{a,b,e,j,m,E,I,J,K}
\docsvlist{atan,CKR, codim,deg,Fix,grad,Hom,icodim, Id, Int,Kom,KR,lk, Mod,pcodim,Sym, supp}
\docsvlist{M,S}

\newcommand{\wt}[1]{\widetilde{#1}}
\newcommand{\wh}[1]{\widehat{#1}}

\def\rev{${}^\textrm{rev}$}
\def\prr{${}^\textrm{par}$}

\newcounter{nparcount}
\newcounter{sparcount}
\def\poscros{\raisebox{-0.1em}{\hbox to 0.8em{\includegraphics[width=0.8em,height=0.8em]{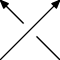}}}}
\def\negcros{\raisebox{-0.1em}{\hbox to 0.8em{\includegraphics[width=0.8em,height=0.8em]{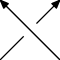}}}}
\def\moycros{\raisebox{-0.1em}{\hbox to 0.8em{\includegraphics[width=0.8em,height=0.8em]{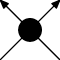}}}}
\def\rescros{\raisebox{-0.1em}{\hbox to 0.8em{\includegraphics[width=0.8em,height=0.8em]{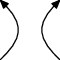}}}}
\def\vircros{\raisebox{-0.1em}{\hbox to 0.8em{\includegraphics[width=0.8em,height=0.8em]{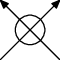}}}}
\def\arccros{\raisebox{-0.1em}{\hbox to 0.4em{\includegraphics[width=0.4em,height=0.8em]{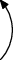}}}}
\def\Rvcros{\raisebox{-0.1em}{\hbox to 1.4em{\includegraphics[width=1.4em,height=0.8em]{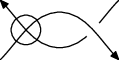}}}}
\def\Rcros{\raisebox{-0.1em}{\hbox to 1.4em{\includegraphics[width=1.4em,height=0.8em]{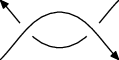}}}}

\def\bra#1{\langle #1\rangle}
\def\posbra{\bra{\poscros}}
\def\negbra{\bra{\negcros}}
\def\moybra{\bra{\moycros}}
\def\resbra{\bra{\rescros}}
\def\virbra{\bra{\vircros}}
\def\arcbra{\bra{\arccros}}
\def\Rvbra{\bra{\Rvcros}}
\def\Rbra{\bra{\Rcros}}

\title{Non-determinacy of HOMFLY-PT homology for diagrams}

\author{Maciej Borodzik}
\address{Institute of Mathematics\\ University of Warsaw
\\Warsaw, Poland}
\email{mcboro@mimuw.edu.pl}

\author{Mikhail Malashchuk}
\address{Institute of Mathematics\\ University of Warsaw
\\Warsaw, Poland}
\email{m.malashchuk@student.uw.edu.pl}

\begin{document}
\maketitle
  
\begin{abstract}
  We show that for any two knots $K_1$, $K_2$ in $\R^3$, there exist diagrams $D_1$ and $D_2$ that represent
  $K_1$ and $K_2$, respectively, such that
  the Khovanov--Rozansky triply graded homologies of $D_1$ and $D_2$ are isomorphic. The methods expand on Abel's paper \cite{Abel}.
\end{abstract}

\section{Introduction}
Let $K$ be a knot. Suppose $D$ is its oriented diagram. Khovanov and Rozansky in \cite{KR2} associate with $D$ a triply graded complex
$\CKR(D)$. The Reidemeister moves R1 and R2\prr{}, as well as the braid-like R3 move, induce graded isomorphisms of complexes. 
In \cite{KR2} it was suggested that the complex is not invariant under the R2\rev{} move.
%which was formally proved in \cite{Abel}. 
To set up the convention,
we show the R2\rev{} and R2\prr{} moves in Figure~\ref{fig:R2aR2b}.\footnote{In the literature, R2\prr{} is often referred to as the R2b move,
however the convention does not seem to be coherently adapted, therefore we suggest the notation R2\prr{}.}
\begin{figure}[h]
  \begin{tikzpicture}
    \node at(-3,1) {\includegraphics[width=2cm]{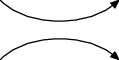}};
    \node at(-3,-1) {\includegraphics[width=2cm]{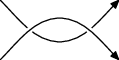}};

    \node at(3,1) {\includegraphics[width=2cm]{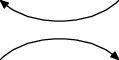}};
    \node at(3,-1) {\includegraphics[width=2cm]{pics/pictures-10.eps}};
    \draw[<->, thick] (-3,0.4) -- (-3,-0.4);
    \draw[<->, thick] (3,0.4) -- (3,-0.4);
  \end{tikzpicture}
  \caption{Left: the R2\prr{} (parallel) move, involving two strands going in the same direction. Right: the R2\rev{} (reverse) move,
  involving two strands going in opposite directions.}\label{fig:R2aR2b}
\end{figure}

In \cite{Abel}, Abel used an extension of Khovanov--Rozansky link homology to virtual links to show that there is a diagram $D$ of the unknot
whose Khovanov--Rozansky homology is that of the trefoil. He showed that the R2\rev{} move does not preserve the homology of $\CKR(D)$.
We expand on that result to prove the following, which is the main result of the article.
\begin{theorem}\label{thm:main}
  For any two knots $K_1$ and $K_2$, there exist diagrams $D_1$ and $D_2$ representing $K_1$ and $K_2$, respectively, such that
  $\CKR(D_1)$ is chain homotopy equivalent to $\CKR(D_2)$ (up to an overall grading shift).
\end{theorem}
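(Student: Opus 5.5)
The strategy is to exploit the failure of invariance under R2\rev{} in a controlled way. Khovanov--Rozansky homology is built from MOY-type resolutions and extends to virtual diagrams, as in \cite{Abel}. In this framework a crossing can sometimes be replaced by a virtual crossing without changing the complex, up to homotopy and grading shift. The specific local move I have in mind is a classical crossing placed next to a reversed clasp of a particular shape. I expect a local lemma of the following form: there is a tangle $T$, built from R2\rev{}-type configurations, such that inserting $T$ around a crossing $c$ gives a complex homotopy equivalent to the one obtained by inserting $T$ around the crossing change of $c$. The tangle $T$ itself is a diagram of the trivial tangle, so it represents the same knot. This is the mechanism behind Abel's example, where the unknot and the trefoil have equal homology.

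\textbf{Step 1: the local crossing-change lemma.} Write the complex of a crossing as a cone $\CKR(\poscros)=\mathrm{Cone}(\moycros\to\rescros)$, and similarly for $\negcros$ with the map reversed. I would look for a tangle $T$ with the following property: after composing with $T$, the resolution $\moycros$ composed with $T$ becomes isomorphic to a virtual crossing composed with $T'$, while $\rescros$ composed with $T$ splits off a contractible piece. Then Gaussian elimination shows that both $\poscros\cdot T$ and $\negcros\cdot T$ are homotopy equivalent to the same complex $\vircros\cdot T'$, up to shift. The candidate for $T$ is the reversed R2 configuration from Figure~\ref{fig:R2aR2b} (right), possibly doubled. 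For this configuration the complex is not that of two parallel strands, and the relevant MOY relations (the decomposition of the reversed bigon) should produce exactly this cancellation. Checking this with explicit maps, including all three gradings, is the computational core. I expect it to be the main obstacle, because I must confirm that the two cones are genuinely equivalent and not merely equal in the Grothendieck group.

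\textbf{Step 2: globalize.} Every knot $K_2$ can be obtained from $K_1$ by finitely many crossing changes. Choose a diagram $D$ of $K_1$ in which a set $c_1,\dots,c_n$ of crossings, when changed, gives a diagram $D'$ of $K_2$. Near each $c_i$, apply Reidemeister moves to insert the tangle $T$ from Step 1; one must make sure an R2\rev{}-type configuration can be created locally there. This yields a diagram $D_1$ of $K_1$. Performing the same insertions around the changed crossings of $D'$ yields a diagram $D_2$ of $K_2$. The diagrams $D_1$ and $D_2$ differ only in the crossings $c_i$, each now surrounded by $T$.

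\textbf{Step 3: locality.} $\CKR$ is built locally: it is a tensor product over tangle pieces, and homotopy equivalences of tangle complexes, which are bimodule or matrix-factorization complexes, extend to the whole diagram. Apply Step 1 at each $c_i$ in turn; each application replaces a crossing by its change and accumulates a grading shift. This gives $\CKR(D_1)\simeq\CKR(D_2)$ up to an overall shift, as Theorem~\ref{thm:main} requires. The remaining care here is that the local equivalence must be natural with respect to the boundary action, so that gluing is legitimate. This holds if the equivalence in Step 1 is constructed as a homotopy equivalence of complexes of bimodules, not just of their closures.
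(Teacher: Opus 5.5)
\textbf{There is a genuine gap, in Step 1.}

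Your Steps 2 and 3 are sound and agree with the paper's globalization. A diagram of $K_1$ whose crossing changes yield a diagram of $K_2$ is Corollary~\ref{cor:unknotting}, obtained from connected sums with descending diagrams. An R2\rev{} clasp can always be inserted next to a crossing, and local equivalences glue.

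The problem is Step 1, which carries the entire content of the theorem and which you leave as a plan rather than a proof. You do not specify $T$. The mechanism you suggest is an unverified guess: that after composing with $T$ the $\rescros$-part of the cone becomes contractible, while the $\moycros$-part becomes $\vircros$ composed with some $T'$, so that Gaussian elimination identifies both cones with the same complex. Your cones are also reversed relative to the conventions here: $C\posbra$ is $C\resbra\to tq^{-2}C\moybra$, and $C\negbra$ is $t^{-1}C\moybra\to C\resbra$. Since Steps 2 and 3 are formal, as it stands the proposal does not prove the theorem.

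The missing ingredient is Abel's Theorem~\ref{thm:abel}. It says that the local configuration \Rcros{} of two adjacent classical crossings may be replaced by \Rvcros{}, a classical crossing next to a virtual one, at the cost of a shift $tq^{-2}$. The paper never analyses the cone of $c$. Instead it works in the three-crossing twist formed by $c$ and an adjacent R2\rev{} clasp:
\begin{enumerate}
\item It applies Theorem~\ref{thm:abel} twice to pairs of adjacent classical crossings, the second time rotated by $180^\circ$.
\item It moves one virtual crossing past the remaining classical crossing with the $Z$-move of Lemma~\ref{lem:move}.
\item It cancels the two virtual crossings by Lemma~\ref{lem:vir2b}.
\end{enumerate}
The result is Proposition~\ref{prop:kain}: a crossing with an adjacent R2\rev{} clasp is equivalent, up to $t^2q^{-4}$, to the crossing of opposite sign \emph{with no clasp}. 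This is a statement of the form $c\cdot T\simeq\bar c$, not your $c\cdot T\simeq\bar c\cdot T$.

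Accordingly, the paper's globalization is asymmetric. Clasps are inserted in $D_1$ only at the $c_i$ that are positive there ($\epsilon_i=-1$), and in $D_2$ only at the $c_i$ that are positive there ($\epsilon_i=1$). Then $\wh{D}_1$ and $\wh{D}_2$ are both equivalent, up to shift, to the common diagram $D$ in which every $c_i$ is negative.
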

Theorem~\ref{thm:main} exhibits a major obstacle to solving Kirby's problem \cite[Problem 1.34]{Kirby}. It is proved in Section~\ref{sec:main}, after necessary terminology is introduced in Section~\ref{sec:review}.

\begin{ack}
  The authors are grateful to L.H. Robert for fruitful conversations.
  MM would like to thank Professor Maciej Borodzik for his guidance. MB was supported by the NCN grant OPUS 2024/53/B/ST1/03470.
\end{ack}

\section{Review of HOMFLY-PT homology}\label{sec:review}
In order to prove Theorem~\ref{thm:main}, we set up some notation and review the definition of HOMFLY-PT homology. We rely on 
\cite{KR2,KR_virtual} and \cite{Rasmussen}. The main computational tool is in Subsection~\ref{sub:main_rel}, where several results
of \cite{Abel} are recalled.
\subsection{MOY graphs}
In \cite{MOY}, Murakami, Ohtsuki, and Yamada introduced trivalent graphs to facilitate the calculus of representations
of the quantum groups $\mathfrak{sl}_N$. A part of this construction is necessary for the construction of diagrammatic HOMFLY-PT homology.
Originally, a MOY graph of maximal weight $N$ is a trivalent oriented planar graph, possibly with boundary, with weights associated with each edge. A MOY graph
is required to satisfy the \emph{flow condition} meaning that the sum of weights of incoming edges at any vertex is equal to the sum of weights
of outgoing edges at that vertex.

A resolution of a standard, uncolored link yields MOY graphs with weights $1$ and $2$. When restricting to such MOY graphs, it is convenient
to assume that all weight $2$ edges are collapsed to a single vertex. This leads to the following definition.
\begin{definition}
  A \emph{reduced MOY graph} is a four-valent oriented planar graph, possibly with boundary, such that at each vertex there are two incoming
  and two outgoing edges. Moreover, the incoming edges are not separated by an outgoing edge.
\end{definition}
Throughout the paper, we will call a reduced MOY graph simply a \emph{MOY-graph}.

A marking of a MOY-graph is a choice of finitely many points on the edges of the graph such that the complement of these points
is a disjoint union of so-called \emph{elementary graphs}, as in Figure~\ref{fig:elementary}. By convention,
the marking points are not on the boundary of the graph.
\begin{figure}
  \begin{tikzpicture}
    \node at (2,0) {\includegraphics[height=1.5cm]{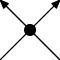}};
    \node at (-2,0) {\includegraphics[height=1.5cm]{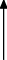}};
\end{tikzpicture}
\caption{Elementary graphs used in the definition of a marked MOY-graph.}\label{fig:elementary}
\end{figure}
\subsection{Chain complex associated with a MOY graph}\label{sub:chain}
Suppose $\Gamma$ is a marked MOY graph. We assign formal variables to the marks and the boundary points of $\Gamma$. Usually, to each marking
point, we associate a variable $t_i$ (so the number of these variables is equal to the number of markings). The boundary points of $\Gamma$
are divided into two types: incoming and outgoing. We associate variables $x_i$ to the incoming points and variables $y_j$ to the outgoing
boundary points. These variables are graded; by convention we assign the grading $2$ to $x_i,y_j,t_k$. We call this grading the \emph{quantum grading} or the \emph{$q$-grading}. We use the following notation:
\[\deg_q x_i=\deg_q y_j=\deg_q t_k=q^2,\]
where $q$ is a formal variable.

Given a marked graph $\Gamma$, we consider
\begin{itemize}
  \item the \emph{total ring}, $E^t(\Gamma)=\Q[\xx,\yy,\mathbf{t}]$;
  \item the \emph{edge ring}, $E(\Gamma)=\Q[\xx,\yy]$;
  \item the \emph{incoming} and \emph{outgoing rings}, $E^i(\Gamma)=\Q[\xx]$ and $E^o(\Gamma)=\Q[\yy]$, respectively.
\end{itemize}

Our goal is to associate a chain complex to any graph $\Gamma$. To achieve this, we introduce the following piece of notation. Let $R$ be a ring
with a $q$-grading.
With a homogeneous ring element $p$, we associate the complex $C_p$ given by
\[0\to a q^{\deg_q p} R\xrightarrow{p} R\to 0.\]
The homological grading of $C_p$ is called the \emph{Hochschild grading}. We use a formal variable $a$
to track this grading. Specifically, $a^k M$ is the module $M$ in the homological grading $k$.
Let $\cP=\{p_1,\dots,p_s\}$ be a finite collection of elements in $R$. We define the complex $C_{\cP}$ as the tensor product
of complexes $C_{p_i}$ for $i=1,\dots,s$.
\begin{definition}\label{def:elementary}
  The \emph{elementary} complex associated with an elementary MOY graph (as in Figure~\ref{fig:elementary}) is the complex
  \[C\arcbra=C_{y-x},\ \ C\moybra=C_{\cP},\]
  where $\cP=\{y_1+y_2-x_1-x_2,(y_1-x_1)(y_1-x_2)\}$. Here, the variables $x,y$, respectively $x_1,x_2,y_1,y_2$, are assigned
  as in Figure~\ref{fig:marking}.
\end{definition}
\begin{figure}
  \begin{tikzpicture}
    \node at (2,0) {\includegraphics[height=1.5cm]{pics/pictures-19.eps}};
    \node at (-2,0) {\includegraphics[height=1.5cm]{pics/pictures-20.eps}};
    \node[scale = 1] at (-2,0.9) {$y$};
    \node[scale = 1] at (-2,-0.9) {$x$};
    \node[scale = 1] at (1.2,-0.9) {$x_1$};
    \node[scale = 1] at (2.8,-0.9) {$x_2$};
    \node[scale = 1] at (1.2,0.9) {$y_1$};
    \node[scale = 1] at (2.8,0.9) {$y_2$};

\end{tikzpicture}
\caption{Variables assigned to the elementary graphs.}\label{fig:marking}
\end{figure}
Note that if $\Gamma$ is an elementary graph, the complex $C(\Gamma)$ is a complex of $E(\Gamma)$-modules.
To define the complex for an arbitrary graph, we introduce two operations: gluing and disjoint sum.
\begin{construction}\label{con:struction}
  Suppose $\Gamma$ is a disjoint union of two graphs $\Gamma_1,\Gamma_2$, and $C(\Gamma_1)$, $C(\Gamma_2)$ are complexes
  of $E(\Gamma_1)$-modules, respectively $E(\Gamma_2)$-modules. The complex $C(\Gamma_1\sqcup\Gamma_2)$ of $E(\Gamma)=E(\Gamma_1)\otimes E(\Gamma_2)$-modules is $C(\Gamma_1)\otimes_{\Q} C(\Gamma_2)$.

  If $\Gamma=\Gamma_1\cup\Gamma_2$ is a union of two oriented graphs along the set of markings $z_1,\dots,z_s$, where $z_1,\dots,z_s$
  can be incoming or outgoing variables, then $E(\Gamma)=E(\Gamma_1)\otimes E(\Gamma_2)/(z_1,\dots,z_s)$. We declare
  \[C(\Gamma)=C(\Gamma_1)\otimes_{\Q[z_1,\dots,z_s]} C(\Gamma_2).\]
\end{construction}
It is proved in \cite{KR2,Rasmussen} that Construction~\ref{con:struction}, together with Definition~\ref{def:elementary}, leads to a consistent
definition of $C(\Gamma)$ as a complex of $E(\Gamma)$-modules.
\begin{notation}
  The differential in the complex $C(\Gamma)$ is denoted $d_h$ (`h' standing either for `horizontal' or `Hochschild').
\end{notation}

\subsection{Bicomplex associated with a link diagram}
We begin by defining two maps between elementary complexes
\[\chi_i\colon C\resbra\to q^{-2}C\moybra,\ \chi_o\colon C\moybra\to C\resbra.\]
More specifically, with $E$ denoting the edge ring for both $\moycros$ and $\rescros$, we define the maps $\chi_i$ and $\chi_o$
as follows.
\[
  \begin{tikzcd}[ampersand replacement=\&, column sep = 2.5cm, row sep = 2cm]
    \rescros \ar[r,phantom,"="]\ar[d,"\chi_i"] 
  \& 
  a^2q^4E \ar[d,"1"]\ar[r,"\begin{pmatrix} y_1-x_1\\ y_2-x_2\end{pmatrix}"] 
  \& 
  aq^2E\oplus aq^2E \ar[r,"{\begin{pmatrix} x_2-y_2 & y_1-x_1 \end{pmatrix}}"] \ar[d,"{\begin{pmatrix} y_1-x_2 & 0 \\ 1 & 1\end{pmatrix}}"]
  \& 
  E\ar[d,"y_1-x_2"]
  \\
  \moycros \ar[r,phantom,"="] 
  \& 
  a^2q^6E \ar[r,"\begin{pmatrix} (y_1-x_1)(y_1-x_2)\\ y_1+y_2-x_1-x_2\end{pmatrix}"] 
  \&
  aq^2E\oplus aq^4E\ar[r,"A"]  %,"\begin{pmatrix} y_1-x_2 & 0 \\ 1 & 1 \end{pmatrix}"]
  \& 
  q^{2}E \\
    \moycros \ar[r,phantom,"="]\ar[d,"\chi_o"] 
  \& 
  a^2q^6E \ar[d,"y_1-x_2"]\ar[r,"\begin{pmatrix} (y_1-x_1)(y_1-x_2)\\ y_1+y_2-x_1-x_2\end{pmatrix}"] 
  \& 
  aq^2E\oplus aq^4E \ar[r,"A"] \ar[d,"{\begin{pmatrix} 1 & 0 \\ -1 & y_1-x_2\end{pmatrix}}"]
  \& 
  E\ar[d,"1"]
  \\
  \rescros \ar[r,phantom,"="] 
  \& 
  a^2q^4E \ar[r,"\begin{pmatrix} y_1-x_1 \\ y_2-x_2\end{pmatrix}"] 
  \&
  aq^2E\oplus aq^2E\ar[r,"{\begin{pmatrix}x_2-y_2 & y_1-x_1\end{pmatrix}}"]  %,"\begin{pmatrix} y_1-x_2 & 0 \\ 1 & 1 \end{pmatrix}"]
  \& 
  E,
\end{tikzcd}
\]
where $A=\begin{pmatrix} x_1+x_2-y_1-y_2 & (y_1-x_1)(y_2-x_2)\end{pmatrix}$.

It is a routine check that $\chi_i$ and $\chi_o$ commute with the differentials.
We now form two bicomplexes of $E$-modules corresponding to the crossings.
\[C\posbra = C\resbra\xrightarrow{\chi_i} tq^{-2}C\moybra,\ 
C\negbra = t^{-1}C\moybra\xrightarrow{\chi_o} C\resbra.\]
Here, $C\resbra$ and $C\moybra$ are complexes with the Hochschild differential. The maps $\chi_i$, respectively $\chi_o$, form the second
differential. We track the third grading by a formal grading variable $t$. 

We now pass to the definition of the bicomplex for a tangle diagram. We will use the formalism of a cube of resolutions. Take a link diagram $D$, and enumerate its crossings as $1,\dots,n$. For each
$c\in\{0,1\}^n$, we resolve the crossing as in Figure~\ref{fig:resolve}.
\begin{figure}
  \begin{tikzpicture}
    \node at (0,2) {\includegraphics[height=1cm]{pics/pictures-14.eps}};
    \node at (0,-2) {\includegraphics[height=1cm]{pics/pictures-15.eps}};
    \node at (4,0) {\includegraphics[height=1cm]{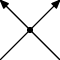}};
    \node at (-4,0) {\includegraphics[height=1cm]{pics/pictures-17.eps}};
    \draw[->,blue!50!black] (0.8,2) -- node [above, midway] {$\mathbf{1}$} node [below,midway] {$\cdot q^{-2}$} (3.8,0.8);
    \draw[->,red!50!black] (-0.8,2) -- node [above, midway] {$\mathbf{0}$} (-3.8,0.8);
    \draw[->,blue!50!black] (-0.8,-2) -- node [above, midway] {$\mathbf{1}$} (-3.8,-0.8);
    \draw[->,red!50!black] (0.8,-2) -- node [above, midway] {$\mathbf{0}$} (3.8,-0.8);
  \end{tikzpicture}
  \caption{Resolution of a diagram. The labels $0$ and $1$ denote the $0$-resolution, respectively the $1$-resolution, of the crossing. The
  $q^{-2}$ denotes the $q$-grading shift.}\label{fig:resolve}
\end{figure}
As a result, we obtain a MOY graph $D(c)$ for each element $c\in\{0,1\}^n$.  Each such element
comes with a quantum grading shift $q^{-2r_c}$, where $r_c$ is the number of occurrences of $1$ in the vector $c$ at coordinates
corresponding to positive crossings. All the graphs $D(c)$ have the same boundary, hence they share a common edge ring. Call it $E$.

Write $C_c$ for the complex $C(\Gamma)$ associated with $D(c)$. This is a complex of free bigraded $E$-modules. 
By shifting the grading by $q^{-2r_c}$ we keep track of the quantum grading.
The differential in $C_c$ 
is the Hochschild differential. If $c,c'$ differ by a single crossing, with $0$ appearing on $c$ and $1$ appearing on $c'$,
then $D(c)$ and $D(c')$ differ locally  by a change, either $\rescros\to q^{-2}\moycros$ or $\moycros\to\rescros$, depending on
the sign of the relevant crossing. We consider the map $d_{cc'}\colon C_c\to C_{c'}$ given by $\pm 1$ times $\chi_o$ or $\chi_i$
at the relevant crossing, tensored by the identity on the remaining part of the diagram. Here the sign $\pm 1$ comes from a so-called
\emph{sign assignment}, a standard technique in Khovanov homology.

Write 
\[\CKR(D)=\bigoplus_{c\in\{0,1\}^n}t^{|c|} C_c,\]
where $|c|$ is the number of $1$'s in $c$. The differentials $d_{cc'}$ are combined to obtain the differential in $\CKR(D)$. We call it
the \emph{vertical differential}.
\begin{definition}
  The bicomplex $\CKR(D)$ is called the \emph{Khovanov--Rozansky} complex associated with the diagram $D$.
\end{definition}

\subsection{Virtual MOY graphs and virtual complexes}
In \cite{KR_virtual} a construction of the Khovanov--Rozansky complex associated with a virtual MOY graph is given. To describe it, we begin by recalling that a \emph{virtual link diagram} $D$ is a diagram with an extra type of crossing, as in Figure~\ref{fig:virtual}.
\begin{figure}
  \includegraphics[width=1.5cm]{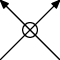}
  \caption{A virtual crossing.}\label{fig:virtual}
\end{figure}
There is an interpretation of a virtual link as a link drawn on a surface of positive genus. A virtual crossing is then regarded as a graphical presentation of a link whose strand going north-west to south-east is passing through a one-handle. 
Virtual links were introduced by Kauffman in \cite{Kaufman}.

A \emph{virtual MOY graph} is a planar graph with two types of vertices: vertices of the first type are MOY vertices, as in Figure~\ref{fig:elementary}; vertices of the second type are virtual
vertices, as in Figure~\ref{fig:virtual}. Put a marking on the graph and assign a variable to each marking point. With a marked graph $D$
we may associate a complex $C(D)$ as in Subsection~\ref{sub:chain}. The complex
for a MOY vertex is the same as in the non-virtual case. The complex for the virtual crossing with markings as in Figure~\ref{fig:virtual2} is given by
\[C\virbra:=C_{\{y_2-x_1,\,y_1-x_2\}}.\]
\begin{figure}
  \begin{tikzpicture}
    \node at (0,0) {\includegraphics[width=1cm]{pics/pictures-18.eps}};
    \node at (0.7,0.7) {$y_2$};
    \node at (0.7,-0.7) {$x_2$};
    \node at (-0.7,-0.7) {$x_1$};
    \node at (-0.7,0.7) {$y_1$};
  \end{tikzpicture}
  \caption{Variables assigned to a virtual crossing.}\label{fig:virtual2}
\end{figure}
Note that the assignment is the same as if the $x_1$--$y_2$ and $x_2$--$y_1$ strands did not intersect.

Suppose now $D$ is a virtual link diagram. Let $n$ be the number of crossings, not counting virtual crossings. For $c\in\{0,1\}^n$,
we associate $D_c$, the corresponding resolution of $D$. The resolution is a virtual MOY graph, and we associate to it a complex $C(D_c)$. Finally,
we define the bicomplex $\CKR(D)$ analogously to the classical, non-virtual setting.

\subsection{Relations among the complexes $\CKR(D)$}\label{sub:main_rel}
We will now describe several relations among $\CKR(D)$ complexes, which will be used in the proof of Theorem~\ref{thm:main}.
Before we begin, we recall that $\CKR(D)$ is a bicomplex with two differentials: the Hochschild differential $d_h$ and
the vertical differential $d_v$. 

To phrase this more abstractly, suppose $R$ is a base ring. Assume $\cC$ is a bicomplex of graded $R$-modules (where the grading reflects the quantum
grading in $\CKR$), that is
$\cC=\bigoplus_{i,j} C_{ij}$, with $d_h\colon C_{ij}\to C_{i+1,j}$, $d_v\colon C_{ij}\to C_{i,j+1}$, and each of the $C_{ij}$ carries the quantum grading.

Let $\Kom(R)$ be the homotopy category of complexes of graded $R$-modules.
Define $C_j=(C_{\bullet,j},d_h)$ as an element in $\Kom(R)$. Then, $\cC$ is regarded as a complex in $\Kom(R)$.
\begin{definition}
  We say that two bicomplexes $\cC,\cC'$ are \emph{homotopy equivalent}, if they are homotopy equivalent as complexes of graded complexes
  in $\Kom(R)$.
\end{definition}
\begin{example}
  Suppose $\cC$ and $\cC'$ are two bicomplexes related by a Gaussian elimination, see \cite[Proposition 4.9]{Abel}. Then, they are
  homotopy equivalent.
\end{example}
Let $\cC$ be a bicomplex, $(C_{\bullet\bullet},d_h,d_v)$. By standard homological algebra, there exists a spectral sequence
abutting to the homology of $(C_{\bullet\bullet},d_h+d_v)$. Its $E_1$-page $E_{ij}^1$ is the homology of the complex $(C_{\bullet,j},d_h)$ in bidegree $(i,j)$. The map $d_v$ induces a differential $d_v^*\colon H(C_{ij},d_h)\to H(C_{i,j+1},d_h)$. The $E_2$-page is the homology
of the complex $(H(C_{ij},d_h),d_v^*)$.
\begin{lemma}
  Assume $\cC$ and $\cC'$ are homotopy equivalent. Then, the homotopy equivalence induces an isomorphism between $E_2$-pages
  of the spectral sequence.
\end{lemma}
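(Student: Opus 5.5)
The plan is to unwind the definition. A homotopy equivalence of bicomplexes is a homotopy equivalence in the category of complexes over $\Kom(R)$. So there are chain maps $f\colon\cC\to\cC'$ and $g\colon\cC'\to\cC$ with three kinds of data. First, there are components $f_j\colon C_j\to C'_j$, which are morphisms in $\Kom(R)$: chain maps for $d_h$, defined only up to $d_h$-homotopy. Second, the square $d_v' f_j\simeq f_{j+1}d_v$ commutes up to $d_h$-homotopy. Third, there are homotopies $H_j\colon C_j\to C_{j-1}$, also morphisms in $\Kom(R)$, with
\[
gf-\mathrm{id}\simeq d_vH+Hd_v ,
\]
again only up to $d_h$-homotopy. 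The claim is that each $E_2$-page depends only on the image of $\cC$ in $\Kom(\Kom(R))$ after applying the homology functor, which kills $d_h$-homotopies.

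The key step is to consider the functor $H\colon \Kom(R)\to \mathrm{grMod}(R)$ that takes $d_h$-homology. It is additive, and $d_h$-homotopic maps induce equal maps on homology, so it is well defined on the homotopy category $\Kom(R)$. Applying it termwise sends a complex $(C_\bullet,d_v)$ over $\Kom(R)$ to the complex of graded modules $(H(C_{\bullet,j},d_h),d_v^*)$. This is exactly the $E_1$-page together with its differential. For this one checks that $d_v$ is a $d_h$-chain map, which holds because $d_hd_v+d_vd_h=0$ (up to the usual sign convention), and that $d_v^*\circ d_v^*=0$.

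Any additive functor sends homotopy equivalences to homotopy equivalences. Concretely, $H(f)$ and $H(g)$ are chain maps of $(E_1,d_v^*)$: the relation $d_v'f\simeq fd_v$ up to $d_h$-homotopy becomes an equality after taking $H$. Likewise $H(g)H(f)-\mathrm{id}=d_v^*H(H)+H(H)d_v^*$, and symmetrically for $fg$. Hence $H(f)$ is a chain homotopy equivalence of the $E_1$-complexes, and so it induces an isomorphism on their homology, which is the $E_2$-page. The isomorphism respects the bigrading and the quantum grading, since all the maps are homogeneous.

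The main obstacle is bookkeeping. One must make sure that the data of a homotopy equivalence in the sense of the definition really consists of maps that are $d_h$-chain maps. Only then do the $H_j$ and $f_j$ induce maps on $d_h$-homology. One must also handle sign conventions in relating the bicomplex differentials to $d_v$ as a morphism in $\Kom(R)$. If instead one only had a homotopy equivalence of total complexes $(d_h+d_v)$, the statement could fail. So the proof will stress that the equivalence is taken in $\Kom(R)$-valued complexes, which is exactly the hypothesis.
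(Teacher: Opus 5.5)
Your proposal is correct and follows the paper's argument: both pass to $d_h$-homology, note that $f_*$, $g_*$ and the induced homotopy satisfy $g_*f_*-\mathrm{id}=d_v^*H_*+H_*d_v^*$ on the $E_1$-page, and conclude that the $E_2$-pages are isomorphic. Your version, phrased through the additive homology functor on $\mathrm{Kom}(R)$, is slightly more careful than the paper's sketch in two places the sketch takes for granted: it allows the relations to hold only up to $d_h$-homotopy, which is what the definition via $\mathrm{Kom}(R)$ literally gives, and it also checks the symmetric composite $f_*g_*$.
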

\begin{proof}
  The proof is standard. Due to the subtlety that the isomorphism need not exist on the $E_1$-page, we give a quick sketch.

  Recall that $\cC$ and $\cC'$ are regarded as complexes $(C_j,d_v)$, $(C_j',d_v')$, where $C_j$ and $C_j'$ are complexes themselves, with differential $d_h$, respectively $d_h'$. Suppose $\cC$ and $\cC'$ are homotopy equivalent. That is, there exist maps $f\colon\cC\to\cC'$, $g\colon\cC'\to\cC$ and $H\colon\cC\to t^{-1}\cC$  with the property that
  $g\circ f -\Id= d_v H+Hd_v$, and $f,g,H$ commute with the differential $d_h$. In particular, $f$ and $g$ induce maps
  $f_*\colon H(C_j)\to H(C'_j)$, $g_*\colon H(C'_j)\to H(C_j)$, and $H$ induces $H_*\colon H(C_j)\to H(C_{j-1})$, satisfying $g_*f_*-\Id=d_v^*H_*+H_*d_v^*$,
  where $d_v^*\colon H(C_j)\to H(C_{j+1})$ is the induced differential. The latter formula implies that $g_*f_*$ induces an isomorphism
  on the homology $H(H(C_j),d_v^*)$, which is what we were supposed to show.
\end{proof}
\begin{example}
  It is usually not the case that homotopy equivalence induces an isomorphism on the $E_1$-pages. To see this, suppose $\cC$ is a bicomplex consisting of $C_{00},C_{01}$, which are free $R$-modules of rank $1$, with $d_h=0$ and $d_v\colon C_{00}\to C_{01}$ the identity. We write $\cC'$ for the zero complex. Then $\cC$ and $\cC'$ are homotopy equivalent; in fact, they are related by Gaussian elimination. But their $E_1$-pages clearly differ.
\end{example}

\begin{lemma}\label{lem:vir2b}
  The tangles in Figure~\ref{fig:vir2b} have homotopic bicomplexes.
  %The virtual Reidemeister 2b move as in Figure~\ref{fig:vir2b} induces a homotopyof the bicomplex $\CKR(D)$.
\end{lemma}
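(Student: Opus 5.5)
The plan is to compute both sides of Figure~\ref{fig:vir2b} directly from the cube of resolutions, working with tangles so that everything is a complex of modules over the common edge ring $E$. I expect the figure to show a two-crossing tangle (one positive and one negative classical crossing) with virtual crossings inserted between the strands, on one side, and the corresponding trivial tangle, on the other. I am not certain of this reading, and the details below depend on it.

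The key observation is the remark following the definition of $C\virbra$: the complex $C_{\{y_2-x_1,\,y_1-x_2\}}$ is exactly the complex of two disjoint arcs with their endpoints relabelled. Gluing along a virtual crossing therefore just permutes variables, by Construction~\ref{con:struction}. After this relabelling, each local complex $C\resbra$ and $C\moybra$ of the virtual tangle is identified with the complex of a classical MOY graph in which the two strands are parallel. So the four-vertex cube $\{0,1\}^2$ of the virtual tangle becomes, column by column, the cube of a classical R2\prr{} configuration. The only differences are an explicit renaming of boundary variables and the vertical maps $\chi_i,\chi_o$, which are given by the same matrices after the substitution.

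The key steps, in order, are as follows.
\begin{enumerate}
\item Write the cube with columns $C_{00}$, $C_{01}\oplus C_{10}$, $C_{11}$, keeping track of the $q$- and $t$-shifts.
\item Identify the column containing the two-vertex MOY graph (the ``square''). Use the standard decomposition of \cite{KR2,Rasmussen}: its complex is homotopy equivalent in $\Kom(E)$, i.e.\ via maps commuting with $d_h$, to $q\,C(\Gamma)\oplus q^{-1}C(\Gamma)$, where $\Gamma$ is a single MOY vertex.
\item Transport the vertical differentials through this equivalence, and check that the components $C(\Gamma)\to C(\Gamma)$ coming from $\chi_i$ and $\chi_o$ are isomorphisms (multiplication by $\pm1$ up to $d_h$-homotopy).
\item Apply Gaussian elimination \cite[Proposition 4.9]{Abel} twice. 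What remains is the single column of the target tangle, which gives the homotopy equivalence of bicomplexes.
\end{enumerate}

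The main obstacle is step~3 together with the compatibility issue. Homotopy equivalence of bicomplexes requires maps that commute strictly with $d_h$ and intertwine $d_v$ up to homotopy. The decomposition in step~2, however, is typically only a $d_h$-homotopy equivalence, so the transported $d_v$ may acquire correction terms. I would first try to choose explicit chain maps for the square decomposition, built out of $\chi_i$, $\chi_o$ and multiplication by edge variables, that are strict $E$-linear chain maps of Koszul complexes. With such choices the transported vertical maps can be computed by hand on the generators of $C_{\cP}$. If strictness fails, the fallback is to replace each column by its minimal Koszul model first, via Gaussian elimination along the horizontal direction. Then the Gaussian elimination in the vertical direction can be run on the total complex in a filtration-preserving way, and this still yields a homotopy equivalence in the sense of the definition above.
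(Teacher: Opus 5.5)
\textbf{There is a genuine gap: you misread Figure~\ref{fig:vir2b}.}

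The figure shows the purely virtual Reidemeister 2\rev{} move. On one side, two oppositely oriented strands cross each other twice virtually. On the other side, the same two strands do not cross at all. Neither tangle has a classical crossing; the paper says so right after the lemma. So each cube of resolutions has a single vertex, and $d_v=0$ on both sides.

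Your steps 1--4 (the cube, the decomposition of the square, transporting $\chi_i$ and $\chi_o$, two Gaussian eliminations) therefore concern a different move. They have nothing to act on here. Even for the move you imagine, the argument is unfinished: step 3 is only asserted, and the strictness problem you raise yourself is left to an unspecified fallback.

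\textbf{What finishes the proof is an observation you already made and then did not use.} By definition, the complex of a virtual crossing is $C_{\{y_2-x_1,\,y_1-x_2\}}=C_{y_2-x_1}\otimes C_{y_1-x_2}$. This is the complex of two disjoint arcs, labelled as if the strands did not meet; the analogous statement holds for a virtual crossing of oppositely oriented strands.

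Choose markings as follows. On the virtual tangle, put one mark on each of the two edges between the virtual crossings. On the other tangle, put one mark on each of the two arcs. By Construction~\ref{con:struction}, on both sides each strand then contributes $C_{t-x}\otimes_{\mathbb{Q}[t]}C_{y-t}$, where $x$, $t$, $y$ are its incoming, middle and outgoing variables. So the two Koszul complexes are literally the same complex of $E$-modules with the same $d_h$.

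Since both vertical differentials vanish, this isomorphism is a homotopy equivalence of bicomplexes. This is exactly the paper's argument. No resolution cube, MOY decomposition or Gaussian elimination is needed.
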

\begin{figure}
  \begin{tikzpicture}
    \node at (-3,0) {\includegraphics[width=2cm]{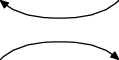}};
    \node at (3,0) {\includegraphics[width=2cm]{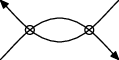}};
  \end{tikzpicture}
  \caption{Virtual Reidemeister 2\rev{} move.}\label{fig:vir2b}
\end{figure}
We remark that the tangles in Figure~\ref{fig:vir2b} have no non-virtual crossings, so their complexes have trivial vertical differential.
As a matter of fact, the isomorphism of the complexes of the tangles already holds at the level of Koszul complexes: the underlying modules are isomorphic and the isomorphism commutes with the differential $d_h$.

\begin{corollary}
  Suppose $D$ and $D'$ are two diagrams that differ by the move replacing two pieces of the diagrams as in Figure~\ref{fig:vir2b}. Then, $\CKR(D)$ and $\CKR(D')$ are homotopy equivalent.
\end{corollary}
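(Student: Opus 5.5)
The corollary is a locality statement: a homotopy equivalence of local pieces, which here is even an isomorphism of Koszul complexes, extends to the whole diagram. I will use Lemma~\ref{lem:vir2b} together with the gluing rule of Construction~\ref{con:struction}.

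Write $D = T\cup_{\mathbf z} S$ and $D' = T'\cup_{\mathbf z} S$. Here $T$ and $T'$ are the two tangles of Figure~\ref{fig:vir2b}, $S$ is the common remainder of the diagram, and $\mathbf z=(z_1,\dots,z_4)$ are the variables at the four gluing points. Choose markings so that the marks on $T$ and $T'$ lie in the interior of these pieces, with the boundary variables matching. Every crossing of $D$ lies in $S$, since $T$ and $T'$ contain only virtual crossings. So $D$ and $D'$ have the same cube $\{0,1\}^n$ of resolutions, and for each $c$ we have $D(c)=T\cup S(c)$ and $D'(c)=T'\cup S(c)$. By Construction~\ref{con:struction},
\[C_c(D)=C(T)\otimes_{\Q[\mathbf z]}C(S(c)),\qquad C_c(D')=C(T')\otimes_{\Q[\mathbf z]}C(S(c)).\]
The vertical maps $d_{cc'}$ act as $\chi_i$ or $\chi_o$ at a crossing in $S$, tensored with the identity elsewhere. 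They therefore have the form $\mathrm{id}_{C(T)}\otimes d^S_{cc'}$, and likewise with $T'$. The sign assignments can be taken identical, because the crossings and the cube are the same.

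By Lemma~\ref{lem:vir2b} and the remark after it, there is an isomorphism $\phi\colon C(T)\to C(T')$ of complexes of $E(T)$-modules. In particular $\phi$ is $\Q[\mathbf z]$-linear, grading preserving, and commutes with $d_h$. Define $\Phi=\bigoplus_c \phi\otimes \mathrm{id}_{C(S(c))}$.
\begin{itemize}
\item $\Phi$ is well defined on the tensor product over $\Q[\mathbf z]$ because $\phi$ is $\Q[\mathbf z]$-linear.
\item $\Phi$ commutes with the Hochschild differential $d_h$, since $d_h$ on a tensor product is given by the Leibniz rule and $\phi$ commutes with $d_h$ on $C(T)$.
\item $\Phi$ commutes with $d_v$, since $d_v$ acts only on the $S$-factor.
\item $\Phi$ has inverse $\bigoplus_c\phi^{-1}\otimes\mathrm{id}$.
\end{itemize}
Thus $\Phi$ is an isomorphism of bicomplexes, and in particular a homotopy equivalence in the sense defined above.

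The only point requiring care is whether the isomorphism of Lemma~\ref{lem:vir2b} is linear over the boundary variables. If it were merely an abstract isomorphism of $\Q$-complexes, it would not pass through $\otimes_{\Q[\mathbf z]}$. I would check this directly. The complex of each tangle is a Koszul complex over $\Q[\mathbf z,\mathbf t]$ on linear elements identifying variables, and the marks can be eliminated by the standard isomorphism $C_{t-z}\otimes_{\Q[t]}M\cong M|_{t=z}$. After elimination, both sides become Koszul complexes $C_{\{y_1-x_2,\,y_2-x_1\}}$ (up to labeling) over the common edge ring. The isomorphism is then the identity on these reduced complexes, hence $E$-linear.

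Two further points need checking. First, if the tangle in Figure~\ref{fig:vir2b} contains closed components or internal marks, the same elimination applies; linear relations are regular, so there is no homology ambiguity. Second, the identification of quantum and Hochschild grading shifts must agree on both sides, which follows because the two Koszul complexes are generated by the same number of elements of degree $q^2$.
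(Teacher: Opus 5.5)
Your argument is the paper's argument made explicit. The paper states the corollary without proof, as an immediate consequence of Lemma~\ref{lem:vir2b} and the remark after it, via the locality you describe: no classical crossing lies in the modified region, so by Construction~\ref{con:struction} the local identification is tensored with the identity on the rest and commutes with $d_v$.

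One step is misstated. The map $C_{t-z}\otimes_{\mathbb{Q}[t]}M\to M|_{t=z}$ is only a quasi-isomorphism, indeed a homotopy equivalence over the remaining variables, not an isomorphism. So the $\phi$ you obtain by eliminating marks is an $E$-linear homotopy equivalence, not an isomorphism. Accordingly, $\bigoplus_c\phi^{-1}\otimes\mathrm{id}$ must be replaced by a homotopy inverse tensored with the identity, and the homotopies tensored likewise; this still yields the corollary.

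A simpler fix is to mark the two arcs of $T'$ at the points matching the two internal gluing points of $T$. A virtual crossing is by definition assigned the Koszul complex of two uncrossed arcs. Hence both tangles then give the Koszul complex on the same four linear elements over the same polynomial ring. Then $\phi$ is the identity up to reordering tensor factors, and it is trivially linear over the boundary variables.
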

Another result is proved in \cite[Proposition 4.2]{Abel}, see also \cite{KR_virtual}. In Figure~\ref{fig:move}, we present one of four variants of the move.
\begin{lemma}\label{lem:move}
  Suppose $D$ and $D'$ are virtual link diagrams differing as in Figure~\ref{fig:move}. Then, $\CKR(D)$ and $\CKR(D')$ are homotopy
  equivalent.
\end{lemma}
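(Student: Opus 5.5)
The plan is to work at the level of the cube of resolutions and show that the bicomplexes of $D$ and $D'$ agree term by term, compatibly with the vertical maps. I assume that the move in Figure~\ref{fig:move} slides a strand carrying only virtual crossings across a single classical crossing (the mixed, or semi-virtual, Reidemeister~3 move). I cannot see the figure, so this reading is a guess. The two diagrams have the same classical crossings, so their cubes are indexed by the same set $\{0,1\}^n$. It therefore suffices to do three things:
\begin{enumerate}
\item For each $c\in\{0,1\}^n$, give an isomorphism (or homotopy equivalence in $\Kom(E)$) between $C(D_c)$ and $C(D'_c)$ that commutes with $d_h$.
\item Check that these maps intertwine the edge maps $d_{cc'}$.
\item Check that the isomorphism respects all three gradings.
\end{enumerate}
Outside the local picture everything is the identity, and Construction~\ref{con:struction} reduces the problem to the local tangle. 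There only the crossing being passed is classical, so there is just one edge map to check, and the sign assignment can be taken to be the same on both sides.

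The key observation concerns the virtual crossing. By definition, $C\virbra=C_{\{y_2-x_1,\,y_1-x_2\}}$, which is exactly the complex $C\arcbra\otimes_{\Q} C\arcbra$ of two disjoint arcs. Consequently, in either resolution ($\rescros$ or $\moycros$) of the classical crossing, each local tangle is built by gluing Koszul complexes whose only relations at the virtual vertices are linear differences of variables. When two such pieces are glued along an internal variable $z$, the tensor product over $\Q[z]$ just substitutes for $z$. So one can eliminate every internal mark: a factor $C_{z-w}$ tensored over $\Q[z]$ is isomorphic to the module with $z$ replaced by $w$.

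After this elimination, I will verify the following for each side:
\begin{itemize}
\item In the $\rescros$ resolution, the Koszul complex is $C_{\cP}$ over the boundary edge ring $E$, where $\cP$ is the same list of linear differences on both sides, identifying each boundary $x$ with the appropriate boundary $y$.
\item In the $\moycros$ resolution, the Koszul complex is $C_{\cP}$ with $\cP$ consisting of $y_1+y_2-x_1-x_2$, $(y_1-x_1)(y_1-x_2)$, and the linear differences for the passing strand, written in boundary variables.
\end{itemize}
This follows because the passing strand's variables never enter the MOY vertex relations. Hence the local complexes of $D_c$ and $D'_c$ are isomorphic as complexes of $E$-modules. The $q$- and $a$-degrees agree because the generators have the same degrees.

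Next, the maps $\chi_i$ and $\chi_o$ are defined by formulas involving only the four variables at the classical crossing. Under the identification above, these variables correspond to the same boundary variables on both sides. So the isomorphisms commute with $d_v$, and we get an isomorphism of bicomplexes, and in particular a homotopy equivalence.

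I expect the main obstacle to be the bookkeeping of internal marks. After the move, the strand passing virtually through the crossing region meets the incoming and outgoing edges of the classical crossing in a different order. Before substitution, the variables at the classical crossing may therefore be internal marks rather than boundary variables, and one must check carefully that the eliminations yield literally the same relation lists, with the same $x_1,x_2,y_1,y_2$ labelling at the MOY vertex. I would first work this out for the single variant in Figure~\ref{fig:move}, tracking an explicit marking. I would then note that the other three variants (with the other crossing sign and orientations) follow identically, since only the choice between $\chi_i$ and $\chi_o$ and the $q^{-2}$ shift change.
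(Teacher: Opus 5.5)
\textbf{There is a genuine gap: you have proved invariance under a different move.} Figure~\ref{fig:move} is not the mixed Reidemeister~3 move. Its caption calls it a $Z$-move, which swaps a virtual crossing with a regular crossing. The proof of Proposition~\ref{prop:kain} uses it to push a virtual crossing past a classical crossing on the same two strands, keeping the sign of the classical crossing while its picture is reversed. The paper gives no argument of its own here; it cites \cite[Proposition 4.2]{Abel}.

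Your reasoning does handle the mixed R3 move, because there the virtual strand decouples and the classical crossing sees the same boundary variables on both sides. For the $Z$-move, that is exactly the step that fails. After absorbing the virtual crossing, the classical crossing is attached to the boundary by labellings that differ by the simultaneous swap $x_1\leftrightarrow x_2$, $y_1\leftrightarrow y_2$, since the two strands exchange roles. The arc resolution is unaffected. The MOY generator $(y_1-x_1)(y_1-x_2)$, however, becomes $(y_2-x_1)(y_2-x_2)$, which agrees with it only modulo $L=y_1+y_2-x_1-x_2$. So the relation lists are not literally equal, and the Koszul complexes are identified only by a nontrivial change of basis.

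The vertical maps are the more serious problem. The components of $\chi_i$ (Hochschild degree $0$) and $\chi_o$ (degree $2$) given by multiplication by $y_1-x_2$ turn into multiplication by $y_2-x_1$, and $(y_1-x_2)+(y_2-x_1)=L$. These components are maps between rank-one free modules, on which grading-preserving automorphisms are scalars. Hence no isomorphism of the variable-eliminated local bicomplexes intertwines the two sides, and your conclusion that you get an isomorphism of bicomplexes is false for this move.

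What is missing is a homotopy argument:
\begin{enumerate}
\item The sequences $(y_1-x_1,y_2-x_2)$ and $(L,(y_1-x_1)(y_1-x_2))$ are regular, so both local Koszul complexes are free resolutions.
\item On $H_0$ the two versions of $\chi_o$ coincide. The two versions of $\chi_i$ differ by $-1$, because $y_2-x_1\equiv-(y_1-x_2)$ modulo $L$.
\item After inserting this sign on the MOY resolution, the comparison theorem makes the two vertical maps $d_h$-homotopic.
\end{enumerate}
This gives equivalent one-crossing bicomplexes in the paper's sense, and gluing in the rest of the diagram preserves the equivalence.

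A smaller point: tensoring with $C_{z-w}$ over $\mathbb{Q}[z]$ gives a homotopy equivalence with the substituted complex, not an isomorphism.
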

\begin{figure}
  \begin{tikzpicture}
    \node at (-3,0) {\includegraphics[width=2cm]{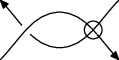}};
    \node at (3,0) {\includegraphics[width=2cm]{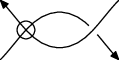}};
  \end{tikzpicture}
  \caption{One of the so-called $Z$-moves: swapping a virtual crossing with a regular crossing.}\label{fig:move}
\end{figure}

Next, we invoke the following result of Abel.
\begin{theorem}[see \expandafter{\cite[Theorem 4.11]{Abel}}]\label{thm:abel}
  The complexes $C\Rvbra$ and $tq^{-2}C\Rbra$ are homotopy equivalent.
\end{theorem}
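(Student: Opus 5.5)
The plan is to work directly with the two-term cone descriptions of both sides. I read $\Rvcros$ as a classical crossing stacked with a virtual crossing on the same two strands, and $\Rcros$ as the corresponding purely classical two-crossing tangle. I will reduce both to cones built from the same two Koszul complexes and then compare the connecting maps.

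\textbf{Step 1: expand the classical crossings.} In $C\Rvbra$ the classical crossing gives a cone $C(\text{res})\to C(\text{MOY})$ via $\chi_i$, or $C(\text{MOY})\to C(\text{res})$ via $\chi_o$, depending on its sign. Each vertex of the cone is glued along the marked points with the virtual crossing complex $C\virbra=C_{\{y_2-x_1,y_1-x_2\}}$, using Construction~\ref{con:struction}. Tensoring over the shared variables eliminates the internal marks. Two simplifications occur:
\begin{itemize}
\item The oriented smoothing composed with the virtual crossing becomes, up to isomorphism, the Koszul complex of a single virtual crossing.
\item The MOY vertex composed with the virtual crossing is isomorphic to the MOY vertex itself. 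This uses the identity
\[(y_1-x_1)(y_1-x_2)-(y_2-x_1)(y_2-x_2)=(y_1-y_2)(y_1+y_2-x_1-x_2),\]
which shows that the ideal $\cP$ is symmetric under $y_1\leftrightarrow y_2$. Hence swapping the outgoing endpoints gives an isomorphic Koszul complex, with an explicit isomorphism given by a unitriangular change of basis in the middle term.
\end{itemize}
I will carry out the same expansion for $C\Rbra$ and then apply Gaussian elimination (\cite[Proposition 4.9]{Abel}) wherever a component of the vertical differential is an isomorphism, for instance in the $\Rcros$ side if it contains two crossings. The target is to reduce $C\Rbra$ to a two-term cone whose terms are the same two Koszul complexes, namely the MOY vertex and the virtual crossing, but in the opposite order or with opposite orientation of the arrow.

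\textbf{Step 2: match gradings.} Once both sides are two-term cones over the same pair of complexes, I will track the $t$- and $q$-gradings. The expected outcome is that one term sits in $t$-degree $1$ with an extra $q^{-2}$ on one side, and in $t$-degree $0$ on the other. This accounts for the overall shift $tq^{-2}$ on $C\Rbra$ in the statement.

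\textbf{Step 3: compare the connecting maps (main obstacle).} I must check that, after conjugating by the isomorphisms from Step~1, the map $\chi_i$ (or $\chi_o$) of one side agrees with the corresponding connecting map of the other side up to a unit and a $d_h$-homotopy. The plan is a direct computation on the three-term Koszul complexes, using the explicit matrices for $\chi_i$ and $\chi_o$.
\begin{enumerate}
\item Substitute $y_1\leftrightarrow y_2$ in the relevant entries, for example $y_1-x_2\mapsto y_2-x_2$.
\item Compare the result with the other map, reducing modulo the Koszul relations. The differences should be of the form $d_hh+hd_h$ for a homotopy $h$ built from the matrix entries $1$ and $0$.
\end{enumerate}
A homotopy-commutative square of this kind yields a homotopy equivalence of the cones, hence of the bicomplexes in the sense of the definition above. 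If the direct comparison fails, I will fall back on a $\mathrm{Hom}$-space argument. The relevant degree-$0$ chain maps between these Koszul complexes should form a rank-one space up to homotopy, so any two nonzero maps agree up to a scalar, and both are nonzero because their cones are not contractible.
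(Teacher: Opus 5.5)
Your Step~1 is computed for the wrong local configuration, and in that configuration the plan cannot work. Your two simplifications (the oriented smoothing glued to the virtual crossing gives $C\virbra$, and the MOY vertex glued to it gives $C\moybra$) describe two \emph{parallel} strands. There the target of Step~1 is unreachable. Every resolution of a purely classical two-crossing tangle on parallel strands has a Koszul complex homotopy equivalent to a sum of shifts of $C\resbra$ and $C\moybra$. Morphisms up to homotopy between these Koszul resolutions are just graded module maps. And $C\virbra$, the resolution of $E/(y_2-x_1,y_1-x_2)$, is a summand of neither. Gaussian elimination only deletes summands, so it can never produce a cone with $C\virbra$ as a term; Steps~2--3 and the rank-one fallback never get started. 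In fact the statement itself fails for parallel strands. For instance, if the two classical crossings have opposite signs, R2\prr{} invariance reduces the right-hand side to the single term $C\resbra$. Your cone, however, has a non-split connecting map, because the MOY module $E/(y_1+y_2-x_1-x_2,(y_1-x_1)(y_1-x_2))$ is cyclic and generated in degree $0$, hence indecomposable.

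The theorem concerns the reverse configuration. In Proposition~\ref{prop:kain} it is applied to a crossing together with an adjacent R2\rev{} bigon, so the strands are oppositely oriented and the two classical crossings point in opposite directions. On the $\Rvcros$ side, the oriented smoothing glued to the virtual crossing is then a pair of turnbacks (the virtual crossing is absorbed). The MOY resolution becomes a vertex around which incoming and outgoing edges alternate, a genuinely virtual graph. So $C\Rvbra$ is a cone between the turnback complex and this virtual vertex.

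On the $\Rcros$ side, the resolutions with at most one MOY vertex (turnbacks plus a circle, or a MOY vertex with an output fed back into an input) are sums of shifted turnback complexes. The remaining resolution has two MOY vertices joined by two oppositely oriented edges, and the idea your plan is missing is the splitting of this complex. Label the incoming boundary points $x_1,x_2$ and the outgoing ones $y_1,y_2$ so that the turnbacks join $x_i$ to $y_i$. Let $z$ mark the internal edge entering the vertex at $x_1$. After eliminating the other internal mark, its Koszul complex is that of $y_1+y_2-x_1-x_2$, $(y_1-x_1)(y_1-z)$ and $(y_1-x_1)(z-x_2)$, which is not a regular sequence. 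With $z'=y_1-z$ it splits into two kinds of pieces:
\begin{itemize}
\item the Koszul complex of $y_1+y_2-x_1-x_2$ and $(y_1-x_1)(y_1-x_2)$, which is exactly the virtual MOY vertex;
\item for each positive power of $z'$, a $q$-shifted turnback complex in two Hochschild degrees.
\end{itemize}
Only after this splitting does the vertical differential have the isomorphism components that let Gaussian elimination cancel everything except one turnback complex and the virtual vertex. The paper itself gives no proof of this statement; it imports it from Abel's Theorem~4.11.
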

From Theorem~\ref{thm:abel} it follows in particular that if two virtual diagrams differ by a change of $\Rvcros$ to $\Rcros$, then
they have homotopy equivalent chain complexes, up to an overall grading shift. From that result, we deduce the following, which is implicit in \cite{Abel}.
\begin{proposition}\label{prop:kain}
  If $D$ and $D'$ are two diagrams differing by a move as in Figure~\ref{fig:kain}, then $\CKR(D)$ and $t^2q^{-4}\CKR(D')$ 
  are homotopy equivalent.
\end{proposition}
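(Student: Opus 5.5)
We have to guess what Figure~\ref{fig:kain} depicts. Since the shift is $t^2q^{-4}$, the natural guess is that the move replaces a configuration containing two crossings by one without them. Each crossing contributes a shift $tq^{-2}$ via Theorem~\ref{thm:abel}, so the move should be obtained by two applications of Abel's replacement of $\Rvcros$ by $\Rcros$, together with virtual moves. The most likely picture is a crossing change, that is, turning a positive crossing into a negative one (or the reverse), realised through a virtual detour.

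The plan is to connect $D$ and $D'$ by a chain of moves, each with an already established effect on $\CKR$:
\begin{itemize}
\item the virtual R2\rev{} move of Lemma~\ref{lem:vir2b} and its Corollary, which is a homotopy equivalence without shift;
\item the $Z$-moves of Lemma~\ref{lem:move}, also without shift;
\item the replacement $\Rvcros\leftrightarrow\Rcros$ of Theorem~\ref{thm:abel}, which costs a shift $tq^{-2}$;
\item the classical moves R1, R2\prr{} and braid-like R3, which induce isomorphisms of complexes as recalled in the introduction.
\end{itemize}

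The steps, in order, are as follows.
\begin{enumerate}
\item Start from the local picture in $D$. Insert a pair of virtual crossings by the inverse of the virtual R2\rev{} move, so that a region of the form $\Rvcros$ appears.
\item Apply Theorem~\ref{thm:abel} to replace it by $\Rcros$, picking up $tq^{-2}$.
\item Slide the remaining virtual crossing past the classical crossing by a $Z$-move (Lemma~\ref{lem:move}).
\item Repeat the same trick a second time, gaining another $tq^{-2}$.
\item Finally cancel the leftover virtual crossings with Lemma~\ref{lem:vir2b}.
\end{enumerate}
The resulting picture should be the local picture of $D'$, with total shift $t^2q^{-4}$. Each step is a local homotopy equivalence of bicomplexes of tangles. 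Because $\CKR$ is built by tensoring local pieces over the edge rings (Construction~\ref{con:struction}), local homotopy equivalences extend to the whole diagram. Here one must note that the equivalences are equivalences of complexes in $\Kom$ of the relevant edge ring, so tensoring with the complement preserves them.

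The main obstacle is bookkeeping the orientations. Theorem~\ref{thm:abel} applies only to a specific oriented configuration, and the $Z$-moves come in four variants, so I must check that the intermediate tangles genuinely have the orientations required at each step. I would first draw the sequence explicitly and verify orientations strand by strand. If one step fails, I would instead use the mirrored variant of Lemma~\ref{lem:move}, or insert a classical R2\prr{} move, to rotate the picture into Abel's configuration. A secondary point is to confirm that the two shifts go in the same direction, producing $t^2q^{-4}$ rather than cancelling. This follows if both applications of Theorem~\ref{thm:abel} are used in the same direction, from the side of $D$ to the side of $D'$.
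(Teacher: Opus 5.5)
There is a genuine gap. Your toolkit and your shift bookkeeping are right: two applications of Theorem~\ref{thm:abel} at $tq^{-2}$ each, one $Z$-move from Lemma~\ref{lem:move}, and one cancellation by Lemma~\ref{lem:vir2b}. Your remark that local equivalences extend to the whole diagram is also fine. But the proof \emph{is} the explicit chain of pictures, and your chain rests on a wrong guess of the move.

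The move in Figure~\ref{fig:kain} has no virtual crossings on either side. One side is a positive crossing $c$ with an R2\rev{} pair inserted next to it, so three classical crossings in a row; this is the picture of Figure~\ref{fig:harmless}, as used in the proof of Theorem~\ref{thm:main}. The other side is the single crossing of the opposite sign. So the move is neither \emph{two crossings replaced by none} nor a crossing change \emph{through a virtual detour}. Your sentence \emph{the resulting picture should be the local picture of $D'$} is exactly the part that needs proof, and you leave it unverified.

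The missing idea is how the virtual crossings are created and then removed.
\begin{itemize}
\item In the paper nothing is inserted at the start. Theorem~\ref{thm:abel} is applied to the first two crossings, and then, in its $180^\circ$-rotated form, to the middle and right ones. Both times it is used in the direction from the purely classical pair to the pair containing a virtual crossing.
\item This leaves one classical crossing between two virtual crossings. Since the later steps do not change its sign, it already has the sign opposite to $c$.
\item One $Z$-move then slides the left virtual crossing past it. The sign is preserved, which is why the crossing is drawn reversed in the fourth picture.
\item The two virtual crossings are now adjacent, and Lemma~\ref{lem:vir2b} cancels them.
\end{itemize}

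Your steps 1--2 run the other way. Inserting a virtual pair and then applying Theorem~\ref{thm:abel} from the virtual to the classical configuration adds a classical crossing. Started from the three-crossing side, this moves you away from the one-crossing side. Likewise, \emph{repeat the same trick} would insert a second virtual pair that is never needed.

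Read from the one-crossing side, your moves do resemble the paper's chain run backwards. There, however, the $Z$-move comes right after the insertion and before both applications of Theorem~\ref{thm:abel}. In your order you would still have to check that the (classical, virtual) pair you feed into Theorem~\ref{thm:abel} is an instance of a variant that is actually available: the original or its $180^\circ$ rotation.
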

\begin{figure}
  \begin{tikzpicture}
    \node at (-3,0) {\includegraphics[width=3cm]{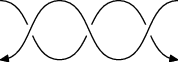}};
    \node at (3,0) {\includegraphics[width=3cm]{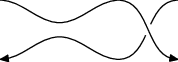}};
  \end{tikzpicture}
  \caption{The move inducing homotopy equivalence on $\CKR$ complexes.}\label{fig:kain}
\end{figure}

\begin{proof}
  The proof is essentially drawn in Figure~\ref{fig:kainproof}. Theorem~\ref{thm:abel} applied to the first two
  crossings allows us to pass from the first diagram of Figure~\ref{fig:kainproof} to the second. Next, we
  apply Theorem~\ref{thm:abel} to the middle and the right crossing. Note that the theorem is applied to the diagram rotated by $180$ degrees,
  so the virtual crossing appears on the right, not in the middle.

  We then apply a $Z$-move of Lemma~\ref{lem:move} to shift the virtual crossing to the right. Note that this move preserves the sign of the crossing,
  so the fourth diagram from the left has the crossing reversed as opposed to the previous diagrams. The last homotopy equivalence
  is obtained by applying Lemma~\ref{lem:vir2b}. Each application of Theorem~\ref{thm:abel} contributes a shift by $tq^{-2}$, which accounts
  for the total shift $t^2q^{-4}$.
\end{proof}
\begin{figure}
  \begin{tikzpicture}
    \node at (-6,0) {\includegraphics[width=2cm]{pics/pictures-2.eps}};
    \node at (-3,0) {\includegraphics[width=2cm]{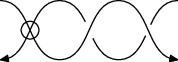}};
    \node at (0,0) {\includegraphics[width=2cm]{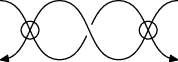}};
    \node at (3,0) {\includegraphics[width=2cm]{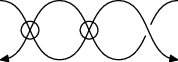}};
    \node at (6,0) {\includegraphics[width=2cm]{pics/pictures-4.eps}};
  \end{tikzpicture}
  \caption{Proof of Proposition~\ref{prop:kain}.}\label{fig:kainproof}
\end{figure}
%\begin{corollary}\label{cor:harmless}
%  Let $D_+$ be a virtual link diagram and suppose there is chosen a positive crossing on $D$. Let $D_-$ be the diagram with the crossing
%  changed to negative. There exists a diagram $D_+'$ representing the same link as $D_+$ such that
%  $\CKR(D_-)$ and $\CKR(D'_+)$ are homotopy equivalent.
%\end{corollary}
%\begin{proof}
%  Apply a Reidemeister 2b move next to the positive crossing on $D_+$ so as to obtain a diagram as in Figure~\ref{fig:harmless}.
%  Call the resulting link diagram $D'_+$. By Proposition~\ref{prop:kain}, the complexes $\CKR(D_-)$ and $\CKR(D'_+)$ are homotopy
%  equivalent.
%\end{proof}

\section{Proof of Theorem~\ref{thm:main}}\label{sec:main}
We begin with the following result, which is well known to the experts. For reference, see e.g.\ \cite[Section 3.1]{Adams}. % Adams' book Section 3.1
\begin{proposition}\label{prop:folklore}
  Suppose $D$ is a knot diagram. Choose a point $x$ on $D$, away from the crossings, and an orientation of $D$. Let $D'$ be the diagram obtained by changing the crossings
  of $D$ in such a way that, traversing $D'$ from $x$ in the chosen direction, every crossing is an overcrossing when we pass through it for the first time. Then $D'$ represents the unknot.
\end{proposition}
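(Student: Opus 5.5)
The plan is to realize $D'$ as a descending (``monotone'') diagram and to exhibit an explicit isotopy in $\R^3$ to a curve lying in a vertical plane, or equivalently to show that the knot bounds an embedded disk. Concretely, parametrize $D'$ by arc length starting from $x$ in the chosen direction: $\gamma\colon[0,1]\to\R^2$, with $\gamma(0)=\gamma(1)=x$. We lift $D'$ to $\R^3$ by assigning height $h(s)=1-s$ to the point $\gamma(s)$ for $s\in[0,1-\varepsilon]$. On the last short arc $s\in[1-\varepsilon,1]$, which we choose to contain no crossings by taking $x$ away from crossings and $\varepsilon$ small, we add a vertical segment rising from height $\varepsilon$ back to height $1$ above $x$.

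The first step is to check that this space curve $\widetilde K$ projects to $D'$ with the correct crossing information. Consider a crossing met at parameters $s_1<s_2$. The point at $s_1$ is passed first and sits at height $1-s_1>1-s_2$, so it is the overcrossing. This is exactly the rule defining $D'$. Hence $\widetilde K$ represents the knot of $D'$. The vertical segment projects to a single point of $\gamma$ near $x$, which can be smoothed by a small perturbation without creating crossings.

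The second step is to show that $\widetilde K$ is unknotted. The cleanest route is to take the cone on $\widetilde K$ from a point far below, or to observe that $\widetilde K$ is a graph over the height function apart from one monotone arc. Consider the height function $z$ restricted to $\widetilde K$: it is strictly decreasing along the long arc and increasing along the short vertical arc. So it has exactly one maximum and one minimum, which means $\widetilde K$ has bridge number one. A knot with a single local maximum is the unknot. This can be verified directly by the following disk construction. For each level $c\in(\varepsilon,1)$, the horizontal plane $z=c$ meets $\widetilde K$ in exactly two points: one on the descending arc and one on the vertical arc. Join them by a straight segment in that plane. The union of these segments over $c$, together with the endpoints, is an embedded disk bounded by $\widetilde K$.

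The main obstacle is verifying embeddedness: segments at different levels are disjoint because they lie in different horizontal planes, and each segment meets $\widetilde K$ only at its endpoints because $\widetilde K$ meets each level plane in just those two points. The union is therefore a continuously varying family of arcs, which forms an embedded disk, possibly after a small smoothing near the top and bottom. A knot bounding an embedded disk is trivial, so $D'$ represents the unknot.
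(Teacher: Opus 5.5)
Your proof is correct. The paper does not prove this statement itself but cites Adams' book (Section 3.1), which uses the same descending-lift idea: the lift goes monotonically down from $x$ and returns by one short rising arc, so it has a single maximum. Your disk, swept out by the horizontal chords joining the two points of $\widetilde K$ at each height, makes the step ``one maximum implies unknot'' rigorous. The only cosmetic point, which your ``small smoothing'' already covers, is that the short arc is best taken to rise monotonically along $\gamma$; then the chords degenerate at both ends, and their union is a disk bounded by exactly $\widetilde K$.
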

\begin{corollary}\label{cor:unknotting}
  Let $K_1$ and $K_2$ be two knots. Then, there exists a diagram $D_1$ representing $K_1$ such that crossing changes on $D_1$ result in a diagram $D_2$ representing $K_2$.
\end{corollary}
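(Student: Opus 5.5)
The plan is to build $D_1$ as a connected sum of a diagram of $K_1$ with an unknot diagram, where the unknot diagram is chosen so that crossing changes inside it realize $K_2$. Proposition~\ref{prop:folklore} supplies such unknot diagrams, and undoing the connected sum is handled by crossing changes on the $K_1$ side.

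First, choose an arbitrary diagram $E_2$ of $K_2$, a base point $x$ on it and an orientation. Let $U$ be the diagram obtained from $E_2$ by changing crossings so that each crossing is first met as an overcrossing when traversing from $x$. By Proposition~\ref{prop:folklore}, $U$ represents the unknot, and changing back the same set of crossings of $U$ recovers $E_2$.

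Next, symmetrically, choose a diagram $E_1$ of $K_1$ and its descending modification $U'$, which is again a diagram of the unknot. Form the connected sum diagram $D_1 = E_1 \# U$, joined at the base point $x$ of $U$ and a point of $E_1$ away from the crossings, so that the connecting bands create no new crossings. Since $U$ is unknotted, $D_1$ represents $K_1 \# \mathrm{unknot} = K_1$.

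Now change all crossings of the $E_1$ part that differ from $U'$, and all crossings of the $U$ part that differ from $E_2$. The result is $D_2 = U' \# E_2$, which represents $K_2$. Here the crossings of each summand are crossings between strands of that summand only. This is exactly why connected sum is used instead of superposing diagrams: it keeps the two parts' crossings separate, so that crossing changes on one part do not affect the other.

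The only point that needs care is that the connected sum is formed planarly, with the two diagrams placed in disjoint disks and joined by an arc pair crossing nothing. Then changing crossings within one disk is a local operation, and the knot types multiply as claimed. I expect no real obstacle beyond checking this placement, and the fact that Proposition~\ref{prop:folklore} applies to any choice of base point and orientation.
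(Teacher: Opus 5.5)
Your proposal is correct and is essentially the paper's proof: $D_1=\wt{D}_1\#\wt{D}_2'$ and $D_2=\wt{D}_1'\#\wt{D}_2$, with the unknot diagrams supplied by Proposition~\ref{prop:folklore}. Your remark that the connected sum should be taken at points on the outer region of each diagram, so that no new crossings arise, is a detail the paper leaves implicit.
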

\begin{proof}
  Let $\wt{D}_1$ be a diagram of $K_1$ and $\wt{D}_2$ a diagram of $K_2$. Let $\wt{D}_1'$ and $\wt{D}_2'$ be the diagrams of the unknot
  obtained from $\wt{D}_1$ and $\wt{D}_2$ by Proposition~\ref{prop:folklore}. Then
  $D_1=\wt{D}_1\# \wt{D}_2'$ and $D_2=\wt{D}_1'\#\wt{D}_2$ represent $K_1$, respectively $K_2$, and they differ by crossing changes.
\end{proof}
\begin{figure}
  \begin{tikzpicture}
    \node at (-3,0) {\includegraphics[width=5cm]{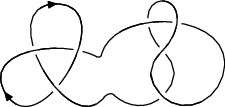}};
    \node at (3,0) {\includegraphics[width=5cm]{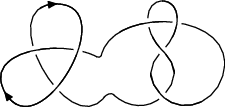}};
  \end{tikzpicture}
  \caption{The two diagrams represent a trefoil and a figure-eight knot. They differ by two crossing changes.}\label{fig:treight}
\end{figure}
\begin{example}
  The procedure of Corollary~\ref{cor:unknotting} is schematically presented in Figure~\ref{fig:treight}.
\end{example}

We are now ready to give a proof of Theorem~\ref{thm:main}.
\begin{figure}
  \begin{tikzpicture}
    \node at (-3,0) {\includegraphics[height=1cm,angle=180]{pics/pictures-14.eps}};
    \node at (3,0) {\includegraphics[height=1cm]{pics/pictures-2.eps}};
  \end{tikzpicture}
  \caption{Proof of Theorem~\ref{thm:main}.}\label{fig:harmless}
\end{figure}
\begin{proof}[Proof of Theorem~\ref{thm:main}]
  Let $K_1$ and $K_2$ be two knots and let $D_1,D_2$ be the diagrams from Corollary~\ref{cor:unknotting}. Let $c_1,\dots,c_s$
  be the crossings on $D_1$ that need to be reversed in order to change $D_1$ to $D_2$. Let $\epsilon_i$, $i=1,\dots,s$
  be the choice of signs at these crossings: if $\epsilon_i=1$, then, while transforming $D_1$ into $D_2$, we change the crossing
  at $c_i$ from negative to positive; if $\epsilon_i=-1$, we change it from positive to negative.

  Let $\wh{D}_1$ and $\wh{D}_2$ be the diagrams obtained from $D_1$ and $D_2$ by the following procedure. Take $D_1$ and a crossing $c_i$
  such that $\epsilon_i=-1$. Modify the diagram near $c_i$ by adding a Reidemeister R2\rev{} move as in Figure~\ref{fig:harmless}. Do this for
  all crossings with $\epsilon_i=-1$, and let $\wh{D}_1$ be the resulting diagram. By construction, $\wh{D}_1$ represents $K_1$.

  Likewise, in the diagram $D_2$, we perform the same operation for all $c_i$ such that $\epsilon_i=1$. Let $\wh{D}_2$ be the resulting diagram. By construction, $\wh{D}_2$ represents $K_2$. Now, let $D$ be the diagram obtained from $D_1$ by changing the crossings $c_i$ with $\epsilon_i=-1$ from positive to negative, and leaving the crossings $c_i$ with $\epsilon_i=1$ unchanged. By Proposition~\ref{prop:kain} applied at each crossing $c_i$ with $\epsilon_i=-1$, the complexes $\CKR(\wh{D}_1)$ and $\CKR(D)$ are chain homotopy equivalent, up to an overall
  grading shift.

  By construction, $D$ is also obtained from $D_2$ by changing all the crossings $c_i$ with $\epsilon_i=1$ from negative to positive. Hence, applying Proposition~\ref{prop:kain} again,
  we obtain a chain homotopy equivalence between $\CKR(\wh{D}_2)$ and $\CKR(D)$, up to an overall grading shift. Combining the two equivalences, $\CKR(\wh{D}_1)$ and $\CKR(\wh{D}_2)$ are chain homotopy equivalent up to an overall grading shift, which proves the theorem.
  %with $D_1:=\wh{D}_1$ and $D_2:=\wh{D}_2$.
\end{proof}

\bibliographystyle{amsalpha}
\def\MR#1{}
\bibliography{bib}

\typeout{Number of still unresolved questions: \arabic{nparcount}.}
\typeout{Number of resolved questions: \arabic{sparcount}.}

\end{document}